\documentclass[11pt, twoside]{article}
\usepackage{latexsym}
\usepackage{amsmath}
\usepackage{amssymb}
\usepackage[all]{xy}
\usepackage{amsfonts}
\usepackage{verbatim}
\usepackage{amsthm}
\usepackage{mathrsfs}
\usepackage{epsfig}
\usepackage{xy}
\usepackage{array}
\usepackage{stmaryrd}
\usepackage{graphicx,color}
\usepackage{xcolor}
\usepackage[colorlinks=true,linkcolor=blue,citecolor=blue]{hyperref}
\usepackage{tikz}
\usetikzlibrary{arrows,calc}
\usepackage{etex}
\usepackage{mathdots}
\usepackage{float}
\usepackage{graphics}
\usepackage{pdflscape}

\usepackage{anysize,hyperref}
\input xypic
\xyoption{all}
\usepackage{enumitem}
\usepackage[perpage,symbol]{footmisc}
\usepackage{setspace}
\numberwithin{equation}{section}

\newcommand{\add}{\operatorname{add}}
\newcommand{\thick}{\operatorname{thick}}
\newcommand{\pr}{\operatorname{pr}}
\newcommand{\proj}{\operatorname{proj}}
\newcommand{\Hom}{\operatorname{Hom}}
\newcommand{\End}{\operatorname{End}}

\newcommand{\Cone}{\operatorname{Cone}}

\newcommand{\pd}{\operatorname{pd}}
\newcommand{\PP}{\mathbb P}
\usepackage{bm}
\begin{document}
\baselineskip=15pt
\title{\Large{\bf A negative answer to a question on tilting objects and\\[2mm] two-term complexes\footnotetext{Jing He is supported by the National Natural Science Foundation of China (Grant No. 12401045). Panyue Zhou is supported by the National Natural Science Foundation of China (Grant No. 12371034).
}}}
\medskip
\author{Jing He and Panyue Zhou}

\date{}

\maketitle
\def\blue{\color{blue}}
\def\red{\color{red}}

\theoremstyle{plain}
\newtheorem{theorem}{Theorem}[section]
\newtheorem{proposition}[theorem]{Proposition}
\newtheorem{lemma}[theorem]{Lemma}
\newtheorem{corollary}[theorem]{Corollary}
\theoremstyle{definition}
\newtheorem{definition}[theorem]{Definition}
\newtheorem{remark}[theorem]{Remark}

\baselineskip=17pt
\parindent=0.5cm
\vspace{-6mm}

\begin{abstract}
\baselineskip=16pt
Let $M$ be a silting object in an idempotent complete algebraic triangulated category $\mathcal T$.
Put $B=\End_{\mathcal T}(M)$, and let
$\PP_M\colon \pr(M)\to K^{[-1,0]}(\proj B)$ be the presentation functor associated with $M$.
It was recently asked whether $\PP_M(T)$ must be tilting whenever $T\in\pr(M)$ is a tilting object.
We answer this question in the negative by giving an explicit finite-dimensional example.
Namely, for
\[
\Lambda=k(1\xrightarrow{\alpha}2\xrightarrow{\beta}3\xrightarrow{\gamma}4)/(\alpha\beta\gamma)
\]
we construct a silting object $M\in K^b(\proj\Lambda)$ and a tilting object $T=\Sigma\Lambda\in\pr(M)$ for which
\[
\dim_k\Hom_{K^b(\proj B)}\bigl(\PP_M(T),\Sigma^{-1}\PP_M(T)\bigr)=1.
\]
Thus $\PP_M(T)$ is a two-term silting complex but not a tilting complex.
\\[0.2cm]
\textbf{Keywords:} silting object, tilting object, two-term silting complex, triangulated category, homotopy category\\[0.1cm]
\textbf{Mathematics Subject Classification 2020:}  16E35; 18G80; 16G20
\end{abstract}

\pagestyle{myheadings}
\markboth{\rightline {\scriptsize   Jing He and Panyue Zhou\hspace{6mm}}}
         {\leftline{\scriptsize A negative answer to tilting question}}

\section{Introduction}

Silting theory can be viewed as a flexible extension of classical tilting theory in triangulated and derived categories. A silting object is required to have no positive self-extensions and to generate the ambient triangulated category, while a tilting object has no self-extensions in any nonzero degree. Thus, a construction that preserves the vanishing of positive extensions may still introduce negative self-extensions, making the distinction between silting and tilting genuinely significant. The example presented in this paper provides a small and explicit illustration of this phenomenon.

Let $\mathcal T$ be an idempotent complete algebraic triangulated category with shift functor $\Sigma$, and let $M$ be a rigid object. Denote by $\pr(M)$ the full subcategory consisting of objects $X$ admitting a triangle
$$
M^{-1}\longrightarrow M^0\longrightarrow X\longrightarrow \Sigma M^{-1}
\quad \mbox{with}~M^{-1},M^0\in\add(M).
$$
Yang \cite{Yang2025} constructed a presentation functor
$$
\PP_M\colon \pr(M)\longrightarrow K^{[-1,0]}(\proj A),
~~\mbox{where}~A=\End_{\mathcal T}(M),
$$
which associates to $X$ the two-term complex arising from such a presentation. When $M$ is silting, this functor induces a bijection between the isomorphism classes of two-term silting objects and two-term silting complexes \cite[Theorem~3.14(c)]{Yang2025}. Yang then asked whether, for a tilting object $T\in\pr(M)$, the complex $\PP_M(T)$ must also be tilting \cite[Question~3.15]{Yang2025}. It remains to determine whether a negative self extension can occur after applying $\PP_M$.

For finite-dimensional algebras, this question is closely connected with the silting theorem of Buan and Zhou \cite{BuanZhou2016}. Starting with a two-term silting complex, their construction yields a two-term silting complex over its endomorphism algebra. Xie, Yang, and Zhang \cite[Section~5]{XieYangZhang2023} later studied when this new complex is tilting and established several sufficient conditions, including the hereditary and symmetric cases. They also gave a description of its negative self extension space in terms of morphisms satisfying two annihilation conditions. In the example considered below, this space is one-dimensional.

Our main result is the following.

\begin{theorem}\label{thm:main}
Let $k$ be a field and let
\[
\Lambda=
 k(1\xrightarrow{\alpha}2\xrightarrow{\beta}3\xrightarrow{\gamma}4)
 /(\alpha\beta\gamma),
\]
where paths are multiplied from left to right.
For $P_i=e_i\Lambda$, consider in $K^b(\proj\Lambda)$ the complexes
\[
X=(P_2\xrightarrow{\alpha}P_1),~
Y=(P_4\xrightarrow{\gamma}P_3),~
Z=P_3,~
W=\Sigma P_2,
\]
with $X$ and $Y$ concentrated in degrees $-1$ and $0$, and put
\[
M=X\oplus Y\oplus Z\oplus W.
\]
Then the following statements hold.
\begin{enumerate}
\item[\rm (1)]$M$ is a silting object of $K^b(\proj\Lambda)$.
\item[\rm (2)]$T=\Sigma\Lambda$ is a tilting object of $K^b(\proj\Lambda)$ and belongs to $\pr(M)$.
\item[\rm (3)] If $B=\End_{K^b(\proj\Lambda)}(M)$, then
\[
\dim_k\Hom_{K^b(\proj B)}
\bigl(\PP_M(T),\Sigma^{-1}\PP_M(T)\bigr)=1.
\]
Consequently, $\PP_M(T)$ is not tilting.
\end{enumerate}
In particular, the answer to {\rm\cite[Question~3.15]{Yang2025}} is negative in general.
\end{theorem}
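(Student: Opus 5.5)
The proof will be a direct computation in $K^b(\proj\Lambda)$. Throughout we use the right modules $P_i=e_i\Lambda$, and morphisms $P_j\to P_i$ are given by left multiplication by paths from $i$ to $j$. Over $k$ these modules have bases
\[
P_1:\ e_1,\alpha,\alpha\beta,\qquad P_2:\ e_2,\beta,\beta\gamma,\qquad P_3:\ e_3,\gamma,\qquad P_4:\ e_4 ,
\]
and this is enough to compute every Hom space involving two-term complexes.

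\textbf{Step (1).} We first show that $\Hom(M,\Sigma^{i}M)=0$ for all $i>0$. All summands lie in degrees $[-2,0]$, so only $i=1,2$ need checking. The summand pairs to examine are:
\begin{itemize}
\item $X\to\Sigma X$ and $Y\to\Sigma Y$, which vanish because $\alpha$ and $\gamma$ are injective;
\item $X\to\Sigma Y$, $Y\to\Sigma X$, and $Z\to\Sigma X$, $Z\to\Sigma Y$, each checked by a homotopy computation;
\item the pairs involving $W=\Sigma P_2$, where $\Hom(P_2,\Sigma^{j}-)$ reduces to cohomology of the target at degree $j-1$, and $\Hom(-,\Sigma^{j}W)$ reduces to maps into $P_2$ in degree $-1-j$.
\end{itemize}
For generation, $\thick(M)$ contains $P_3=Z$ and $P_2=\Sigma^{-1}W$. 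It then contains $P_1$ via the cone of $X$, and $P_4$ via the triangle for $Y$. Since $M$ has four indecomposable summands, this also matches the rank of $\Lambda$.

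\textbf{Step (2).} Tilting of $\Sigma\Lambda$ is immediate because $\Lambda$ is tilting. To place $T$ in $\pr(M)$ we need a triangle $M^{-1}\to M^0\to\Sigma P_i\to\Sigma M^{-1}$ for each $i$:
\begin{itemize}
\item For $\Sigma P_2$, take it directly as $W\in\add M$.
\item For $\Sigma P_1$, use the triangle $P_1\to X\to\Sigma P_2\to\Sigma P_1$ and rotate it to get $X\to\Sigma P_2\to\Sigma P_1\to\Sigma X$. This yields the presentation $X\to W\to\Sigma P_1$.
\item For $\Sigma P_3$, take $Z\to 0\to\Sigma Z$, i.e.\ $M^{-1}=Z$ and $M^0=0$.
\item For $\Sigma P_4$, use $P_3\to Y\to\Sigma P_4$, rotated to $Y\to\Sigma P_4\to\Sigma P_3\to\Sigma Y$. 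The needed shape is $M^{-1}\to M^0\to\Sigma P_4$. From $P_4\to P_3\to Y$ we get $\Sigma P_4\to\Sigma P_3$, so we instead write $Z\xrightarrow{?}Y\to \Sigma P_4$, using the triangle $P_3\to Y\to\Sigma P_4\to\Sigma P_3$. Here $Z=P_3$ maps to $Y$ by the identity in degree $0$.
\end{itemize}
Thus $\Sigma P_4=\Cone(Z\to Y)$, and every summand of $T$ lies in $\pr(M)$.

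\textbf{Step (3).} Applying $\PP_M=\Hom(M,-)$ to these presentations, using $\Hom(M,\add M)\cong\proj B$, gives
\[
\PP_M(T)=\bigl(e_XB\to e_WB\bigr)\oplus\bigl(0\to e_WB\bigr)\oplus\bigl(e_ZB\to0\bigr)\oplus\bigl(e_ZB\to e_YB\bigr),
\]
with the maps induced by the morphisms above. Here we abbreviate $e_UB=\Hom(M,U)$. To compute $B$ we determine all the spaces $\Hom(U,V)$ for $U,V\in\{X,Y,Z,W\}$ together with their compositions, obtaining $B$ as a quiver with relations.

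A chain map $\PP_M(T)\to\Sigma^{-1}\PP_M(T)$ consists of maps from degree $0$ terms to degree $-1$ terms that annihilate the differentials on both sides. There are no homotopies, since homotopies would go into degree $1$, where $\Sigma^{-1}$ of a two-term complex vanishes. Such a map is therefore a morphism $f\colon H^0\to H^{-1}$ satisfying two annihilation conditions, as in \cite{XieYangZhang2023}. Concretely, the relevant space consists of $f\colon e_WB\oplus e_WB\oplus e_YB\to e_XB\oplus e_ZB\oplus e_ZB$ with $d\circ f=0$ and $f\circ d=0$. By Yoneda, $f$ is given by elements of $\Hom(W,X)$, $\Hom(W,Z)$, $\Hom(Y,X)$ and $\Hom(Y,Z)$.

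The candidate nonzero element is the morphism $Y\to Z$ obtained as the composite $Y\to \Sigma P_4\to\dots$. More concretely, it is the chain map $Y\to P_3$ given by the identity in degree $0$? That map is not a chain map, since $\gamma\neq 0$. So the right candidate is $W\to Z$ or $Y\to X$, where $\Sigma P_2\to P_3$ vanishes.

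\textbf{Main obstacle.} The main obstacle is this final bookkeeping: computing $B$ and the four Hom spaces correctly, and verifying that exactly a one-dimensional subspace survives both annihilation conditions. I would first compute $\Hom(Y,X)$, which is expected to be one-dimensional. It is spanned by the map $P_3\to P_1$ given by left multiplication by $\alpha\beta$, with $P_4\to P_2$ given by $\beta\gamma$, made a chain map because $\alpha\beta\gamma=0$ forces compatibility. I would then check that its composites with $X\to W$ and $Z\to Y$ vanish. The relation $\alpha\beta\gamma=0$ is the only source of such a morphism, which explains why the dimension is exactly $1$.
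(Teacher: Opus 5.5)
Your route is essentially the paper's. The summand-wise presentation $X\oplus Z\oplus Z\to W\oplus W\oplus Y$ from your Step~(2) is, up to isomorphism, the paper's $E\xrightarrow{f}C$ with $C\cong W^{\oplus2}\oplus Y$. After Yoneda, the space in (3) becomes $\{u\in\Hom(C,E)\mid u\circ f=0=f\circ u\}$. You do not need to describe $B$ by quiver and relations, since full faithfulness of $F$ on $\add(M)$ lets you compute in $K^b(\proj\Lambda)$. The only potentially nonzero block of this space is $\Hom_{K^b(\proj\Lambda)}(Y,X)$.

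The gap is at exactly the step that produces the counterexample. The element you offer as a spanning vector of $\Hom(Y,X)$ has $u^{-1}=\beta\gamma\colon P_4\to P_2$ and $u^0=\alpha\beta\colon P_3\to P_1$, and it is null-homotopic. For $h=\beta\colon Y^0=P_3\to X^{-1}=P_2$ one has $h\gamma=\beta\gamma=u^{-1}$ and $\alpha h=\alpha\beta=u^0$. Its chain-map condition $\alpha\cdot\beta\gamma=\alpha\beta\cdot\gamma$ holds tautologically, not because of $\alpha\beta\gamma=0$. So your witness is the zero class, and as written nothing shows the negative extension space is nonzero, which is the whole point of the theorem.

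The missing computation is short:
\begin{itemize}
\item Chain maps $Y\to X$ are the pairs $(a\beta\gamma,\,b\alpha\beta)$ with $a,b\in k$ independent, because $\alpha\beta\gamma=0$ makes both sides of the chain-map equation vanish.
\item The null-homotopic ones are exactly $\lambda(\beta\gamma,\alpha\beta)$.
\item Since $\alpha\beta\neq0$, the class of $(\beta\gamma,0)$ is nonzero and spans the one-dimensional quotient. This is the paper's $g$.
\end{itemize}

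You must also finish the other blocks. In particular you never address $\Hom(W,X)$: a chain map $\Sigma P_2\to X$ is a scalar $c$ with $c\alpha=0$, so $\Hom(W,X)=0$. You also still need the two annihilation conditions. Here they are automatic, because $\Hom(Y,W)\cong\Hom(P_4,P_2)/\Hom(P_3,P_2)\gamma=0$ and $\Hom(Z,X)\cong H^0(X)e_3=0$.

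Two smaller points in Step~(1):
\begin{itemize}
\item Your claim that $\alpha\colon P_2\to P_1$ is injective is false. It kills $\beta\gamma$, which is precisely why $(\beta\gamma,0)$ is a chain map. The vanishing $\Hom(X,\Sigma X)=0$ holds instead because $\Hom(P_2,P_1)=k\alpha$.
\item All summands of $M$ live in degrees $[-1,0]$, not $[-2,0]$.
\end{itemize}
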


The obstruction is represented by the path
$$
P_4\xrightarrow{\beta\gamma}P_2.
$$
The relations
$$
\beta\gamma\neq0,~\alpha\beta\neq0,~
\alpha\beta\gamma=0
$$
enter the argument in different ways. The first ensures that the obstruction is nonzero, the second shows that it is not null-homotopic, and the third guarantees that it defines a chain map. Together, these relations account for the negative self extension and show why the standard sufficient criteria are not applicable in this example.

The paper is organized as follows. In Section~2, we fix the notation and recall the presentation functor used throughout. In Section~3, we construct the silting object and a presentation of $\Sigma\Lambda$. In Section~4, we determine the negative self extension space and prove Theorem~\ref{thm:main}. In Section~5, we compare the example with the projective-dimension criterion in \cite{XieYangZhang2023}.

\section{Preliminaries}

Throughout this paper, $k$ denotes a field, and all categories are assumed to be $k$-linear. A triangulated category is called algebraic if it is triangle equivalent to the stable category of a Frobenius exact category.

For an additive category $\mathcal A$, we denote by $K^b(\mathcal A)$ the bounded homotopy category of cochain complexes over $\mathcal A$. Let $K^{[-1,0]}(\mathcal A)$ be the full subcategory consisting of objects isomorphic to complexes concentrated in degrees $-1$ and $0$. In \cite{Yang2025}, this subcategory is denoted by $\mathcal H^{[-1,0]}(\mathcal A)$. We use the shift convention
$$
(\Sigma C)^i=C^{i+1},~ d_{\Sigma C}^i=-d_C^{i+1}.
$$
A complex concentrated in degrees $-1$ and $0$ will be called a two-term complex.

We first recall the notions of silting and tilting objects that will be used in the sequel.
Let $\mathcal T$ be a triangulated category. For an object $U\in\mathcal T$, we write $\add(U)$ for the full subcategory of direct summands of finite direct sums of copies of $U$, and $\thick(U)$ for the smallest thick subcategory of $\mathcal T$ containing $U$.

\begin{definition}\cite[Definition 2.1]{AiharaIyama2012}
An object $U\in\mathcal T$ is \emph{presilting} if
$$
\Hom_{\mathcal T}(U,\Sigma^iU)=0
~~\text{for all }~ i>0.
$$
It is \emph{silting} if it is presilting and $\thick(U)=\mathcal T$. It is \emph{tilting} if
$$
\Hom_{\mathcal T}(U,\Sigma^iU)=0
~~\text{for all } ~i\neq0
$$
and $\thick(U)=\mathcal T$.
\end{definition}

We next recall the construction associated with a rigid object that will be used in the proof.
Let $M$ be a rigid object in an idempotent complete algebraic triangulated category $\mathcal T$, that is,
$$
\Hom_{\mathcal T}(M,\Sigma M)=0.
$$
Following \cite{Yang2025}, we consider the full subcategory
$$
\pr(M)=\left\{X\in\mathcal T\ \middle|\
\begin{array}{c}
\text{there exists a triangle }M^{-1}\to M^0\to X\to\Sigma M^{-1}\\
\text{with }M^{-1},M^0\in\add(M)
\end{array}
\right\}.
$$
Set $A=\End_{\mathcal T}(M)$. The restricted Yoneda functor
$$
F_M=\Hom_{\mathcal T}(M,-)\colon\add(M)\longrightarrow\proj A
$$
is an equivalence. For a presentation
$$
M^{-1}\xrightarrow{~a~}M^0\longrightarrow X\longrightarrow\Sigma M^{-1},
$$
the presentation functor introduced in \cite{Yang2025} sends $X$, up to isomorphism, to the two-term complex
$$
\PP_M(X)=\bigl(F_M(M^{-1})\xrightarrow{F_M(a)}F_M(M^0)\bigr).
$$
If $M$ is silting, then $\PP_M$ induces a correspondence between two-term silting objects in $\pr(M)$ and two-term silting complexes over $A$, see \cite[Theorem~3.14]{Yang2025}.

We will use the following description of negative extensions for two-term complexes. It is the two-term version of the obstruction considered in \cite[Lemma~5.4]{XieYangZhang2023}. We include a proof for completeness.

\begin{lemma}\label{lem:obstruction}
Let $\mathcal A$ be an additive category and let
$$
S=(S^{-1}\xrightarrow{d}S^0)
$$
be a two-term complex.
Then
$$
\Hom_{K^b(\mathcal A)}(S,\Sigma^{-1}S)
\cong
\{u\in\Hom_{\mathcal A}(S^0,S^{-1})\mid ud=0=du\}.
$$
In particular, no further quotient by chain homotopies is needed on the right-hand side.
\end{lemma}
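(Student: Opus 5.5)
The plan is to compute chain maps $S\to\Sigma^{-1}S$ and homotopies directly, using the shift convention fixed above. First I determine the complex $\Sigma^{-1}S$. Since $(\Sigma^{-1}S)^i=S^{i-1}$, the complex $\Sigma^{-1}S$ is concentrated in degrees $0$ and $1$, with $(\Sigma^{-1}S)^0=S^{-1}$, $(\Sigma^{-1}S)^1=S^0$, and differential $-d$ (the sign convention only introduces a sign, which will not matter for the kernel conditions).

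Next I describe the chain maps. A chain map $f\colon S\to\Sigma^{-1}S$ has components $f^i\colon S^i\to S^{i-1}$, and these can be nonzero only when both $S^i$ and $(\Sigma^{-1}S)^i$ are nonzero, that is, only for $i=0$. So $f$ is determined by a single morphism $u=f^0\colon S^0\to S^{-1}$. I then check commutativity in the two relevant squares.
\begin{itemize}
\item In degree $-1$, the square runs from $S^{-1}\xrightarrow{d}S^0$ to $0\to S^{-1}$. It forces $f^0\circ d=0$, which in the paper's notation is the condition $du=0$ or $ud=0$, according to the composition order.
\item In degree $0$, the square runs from $S^0\to 0$ to $S^{-1}\xrightarrow{-d}S^0$. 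It forces $d\circ f^0=0$, which is the other condition.
\end{itemize}
Thus the chain maps $S\to\Sigma^{-1}S$ correspond bijectively to the morphisms $u$ satisfying both annihilation conditions. This correspondence is additive, and $k$-linear when $\mathcal A$ is $k$-linear.

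Finally I show that no nonzero chain map is null-homotopic. A homotopy consists of maps $h^i\colon S^i\to(\Sigma^{-1}S)^{i-1}=S^{i-2}$. For $i\in\{-1,0\}$, the target $S^{i-2}$ lies in degree $-3$ or $-2$ of $S$, and both of these are zero. Hence every homotopy vanishes, the only null-homotopic map is $0$, and the $\Hom$ group in $K^b(\mathcal A)$ is exactly the group of chain maps. This gives the claimed isomorphism together with the remark that no quotient by homotopies is needed.

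There is no real obstacle in this argument. The only point requiring care is the composition and sign convention: the paper writes paths left to right, so "$ud=0=du$" should be read consistently. Because both conditions appear symmetrically, with the sign from $\Sigma^{-1}$ irrelevant to the vanishing, either reading gives the same set.
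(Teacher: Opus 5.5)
Your proof is correct and follows the same route as the paper: a chain map $S\to\Sigma^{-1}S$ reduces to a single component $u\colon S^0\to S^{-1}$ subject to the two annihilation conditions, and every homotopy $h^i\colon S^i\to S^{i-2}$ vanishes for degree reasons. Your degree bookkeeping is in fact more precise than the paper's, which says loosely that both complexes sit in degrees $-1$ and $0$, whereas $\Sigma^{-1}S$ lives in degrees $0$ and $1$.
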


\begin{proof}
A chain map from $S$ to $\Sigma^{-1}S$ has at most one nonzero component,
namely a morphism $u\colon S^0\to S^{-1}$.
The chain map conditions are exactly $ud=0=du$.
Moreover, any homotopy between such chain maps is zero in the relevant degree, since both complexes are concentrated in degrees $-1$ and $0$. Hence the chain maps coincide with the morphisms in the homotopy category.
\end{proof}

\section{The counterexample}

{\bf 3.1 The algebra and its projectives.}
~~We now describe the algebra used in the counterexample. Its small size allows us to compute the relevant projective modules and morphisms explicitly.

Let
$$
\Lambda=
 k(1\xrightarrow{\alpha}2\xrightarrow{\beta}3\xrightarrow{\gamma}4)
 /(\alpha\beta\gamma).
$$
We compose paths from left to right. Thus $\alpha\beta$ is the path from $1$ to $3$, while $\beta\gamma$ is the path from $2$ to $4$. The residue classes of
$$
e_1,e_2,e_3,e_4,\alpha,\beta,\gamma,\alpha\beta,\beta\gamma
$$
form a $k$-basis of $\Lambda$. Hence $\dim_k\Lambda=9$, and we have
\begin{equation}\label{eq:path-relations}
\alpha\beta\neq0,~
\beta\gamma\neq0,~
\alpha\beta\gamma=0.
\end{equation}

Let $P_i=e_i\Lambda$ be the indecomposable projective right $\Lambda$-modules. For right projective modules, evaluation at $e_i$ yields a natural isomorphism
$$
\Hom_\Lambda(P_i,P_j)\cong e_j\Lambda e_i.
$$
Accordingly, a path from $i$ to $j$ determines a morphism from $P_j$ to $P_i$. We use the same notation for the path and the corresponding morphism. In particular,
$$
\alpha:P_2\to P_1,~
\beta:P_3\to P_2,~
\gamma:P_4\to P_3.
$$
The Hom spaces needed later are
\begin{equation}\label{eq:homspaces-projectives}
\begin{alignedat}{3}
\Hom(P_2,P_1)&=k\alpha, \qquad&
\Hom(P_3,P_2)&=k\beta, \qquad&
\Hom(P_4,P_3)&=k\gamma,\\
\Hom(P_3,P_1)&=k\alpha\beta, &
\Hom(P_4,P_2)&=k\beta\gamma, &
\Hom(P_4,P_1)&=0.
\end{alignedat}
\end{equation}
while $\End(P_i)=k$ for all $i$.
\vspace{2mm}

\hspace{-5mm}{\bf 3.2 A silting object.}~ We now construct the silting object that will give the desired counterexample.
The category $K^b(\proj\Lambda)$ is an idempotent complete algebraic triangulated $k$-category, via the degreewise split Frobenius structure on bounded complexes of projective $\Lambda$-modules.
In $K^b(\proj\Lambda)$, we define
$$
X=(P_2\xrightarrow{\alpha}P_1),~
Y=(P_4\xrightarrow{\gamma}P_3),~
Z=P_3,~
W=\Sigma P_2,
$$
and set
$$
M=X\oplus Y\oplus Z\oplus W.
$$

\begin{proposition}\label{prop:M-silting}
The object $M$ is silting in $K^b(\proj\Lambda)$.
\end{proposition}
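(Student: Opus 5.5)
We have to prove two things: that $M$ is presilting, i.e. $\Hom(M,\Sigma^iM)=0$ for all $i>0$, and that $\thick(M)=K^b(\proj\Lambda)$.

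\textbf{Generation.} We show that each $P_i$ lies in $\thick(M)$.
\begin{itemize}
\item $P_3=Z$ and $P_2=\Sigma^{-1}W$ lie in $\thick(M)$ directly.
\item The triangle $P_2\xrightarrow{\alpha}P_1\to X\to\Sigma P_2$ gives $P_1\in\thick(M)$.
\item The triangle $P_4\xrightarrow{\gamma}P_3\to Y\to\Sigma P_4$ gives $P_4\in\thick(M)$.
\end{itemize}
Hence $\Lambda\in\thick(M)$, so $\thick(M)=K^b(\proj\Lambda)$.

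\textbf{Presilting.} The summands $X,Y,Z$ are concentrated in degrees $-1,0$, while $W$ is concentrated in degree $-1$. So for $i\ge 2$ every chain map $M\to\Sigma^iM$ vanishes for degree reasons, and only $i=1$ needs checking. We go through the sixteen pairs $(U,V)$ with $U,V\in\{X,Y,Z,W\}$ and show $\Hom(U,\Sigma V)=0$, using the Hom table \eqref{eq:homspaces-projectives}.

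A morphism $U\to\Sigma V$ between two-term complexes is given by a map $U^{-1}\to V^0$. It is null-homotopic if and only if it lies in the image of
\[
(h,h')\mapsto h\circ d_U+d_V\circ h',\qquad h\colon U^0\to V^0,\quad h'\colon U^{-1}\to V^{-1}.
\]
$W$ is concentrated in degree $-1$, so $\Sigma W$ is concentrated in degree $-2$. Therefore $\Hom(U,\Sigma W)=0$ for all $U$.

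For $U=W$ the target degree changes. $W=\Sigma P_2$, so $\Hom(W,\Sigma V)=\Hom(P_2,V)$ with $P_2$ in degree $0$. A map $P_2\to V^0$ must be a chain map, i.e. compatible with the zero differential out of degree $0$. It is null-homotopic if it factors through $d_V$. Case by case:
\begin{itemize}
\item $V=X$: a map $P_2\to P_1$ is a multiple of $\alpha$, hence factors through $d_X=\alpha$.
\item $V=Y$: a map $P_2\to P_3$ is zero, since there is no path from $3$ to $2$.
\item $V=Z$: likewise $\Hom(P_2,P_3)=0$.
\item $V=W$: $\Hom(P_2,\Sigma P_2)=0$.
\end{itemize}

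For $U\in\{X,Y,Z\}$ and $V\in\{X,Y,Z\}$, we need $U^{-1}\to V^0$. $Z$ has $Z^{-1}=0$, so $U=Z$ gives $0$. The remaining cases:
\begin{itemize}
\item $X\to\Sigma X$: a map $P_2\to P_1$ is $c\alpha=\alpha\circ \mathrm{id}$, a homotopy.
\item $X\to\Sigma Y$: $P_2\to P_3$ is $0$.
\item $X\to\Sigma Z$: $P_2\to P_3$ is $0$.
\item $Y\to\Sigma Y$: a map $P_4\to P_3$ is $c\gamma$, null-homotopic.
\item $Y\to\Sigma X$: a map $P_4\to P_1$ is $0$.
\item $Y\to\Sigma Z$: a map $P_4\to P_3$ is $c\gamma=\mathrm{id}_{P_3}\circ\gamma$, null-homotopic via $h=\mathrm{id}_{P_3}$.
\end{itemize}

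The main care is getting the direction conventions for the morphisms between projectives right; once that is done, every case is immediate. Together with generation, this shows $M$ is silting.
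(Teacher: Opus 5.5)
Your proof is correct and takes essentially the same approach as the paper. You get generation from $Z$, $\Sigma^{-1}W$ and the triangles attached to $X$ and $Y$, and presilting by checking the sixteen spaces $\Hom(U,\Sigma V)$ as maps $U^{-1}\to V^0$ modulo $d_V\circ(-)+(-)\circ d_U$, which is exactly the paper's table (including $\Hom(P_4,P_1)=0$ from $\alpha\beta\gamma=0$ and the degree argument for $i\ge2$).
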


\begin{proof}
For two-term complexes
$$
U=(U^{-1}\xrightarrow{~d_U~}U^0),~
V=(V^{-1}\xrightarrow{~d_V~}V^0),
$$
there exists an isomorphism
$$
\Hom_{K^b(\proj\Lambda)}(U,\Sigma V)
\cong
\frac{\Hom_\Lambda(U^{-1},V^0)}
{d_V\Hom_\Lambda(U^{-1},V^{-1})+
 \Hom_\Lambda(U^0,V^0)d_U}.
$$
A direct computation using \eqref{eq:homspaces-projectives} gives
\[
\begin{array}{c|cccc}
\Hom_{K^b(\proj\Lambda)}(-,\Sigma -) & X & Y & Z & W\\[2mm] \hline
X & k\alpha/k\alpha & 0 & 0 & 0\\[2mm]
Y & 0 & k\gamma/k\gamma & k\gamma/k\gamma & 0\\[2mm]
Z & 0 & 0 & 0 & 0\\[2mm]
W & k\alpha/k\alpha & 0 & 0 & 0
\end{array}
\]
and hence all entries vanish. Therefore,
$$
\Hom_{K^b(\proj\Lambda)}(M,\Sigma M)=0.
$$
Since $M$ is two-term, we also have
$$
\Hom_{K^b(\proj\Lambda)}(M,\Sigma^iM)=0
~~\text{for all}~i\ge2.
$$
Thus $M$ is presilting.

It remains to show that $M$ generates $K^b(\proj\Lambda)$. Since $W=\Sigma P_2$, we have
$P_2\in\thick(M)$, and $Z=P_3$ gives $P_3\in\thick(M)$. The complexes $X$ and $Y$ induce triangles
$$
P_2\xrightarrow{~\alpha~}P_1\longrightarrow X\longrightarrow\Sigma P_2
$$
and
$$
P_4\xrightarrow{~\gamma~}P_3\longrightarrow Y\longrightarrow\Sigma P_4.
$$
It follows that $P_1,P_4\in\thick(M)$. Hence all indecomposable projective $\Lambda$-modules belong to $\thick(M)$, and therefore
$$
\thick(M)=K^b(\proj\Lambda).
$$
Thus $M$ is silting.
\end{proof}

\begin{remark}\label{rem:M-not-tilting}
The silting object $M$ is not tilting. Indeed, since $W=\Sigma P_2$, the nonzero path
$$
\beta\colon P_3=Z\longrightarrow P_2=\Sigma^{-1}W
$$
gives a nonzero element of $\Hom_{K^b(\proj\Lambda)}(M,\Sigma^{-1}M)$. Consequently, the criterion in
\cite[Corollary~5.2]{XieYangZhang2023}, which ensures that the associated silting complex is tilting when the original two-term silting complex is tilting, cannot be applied to the present example.
\end{remark}

\hspace{-5mm}{\bf 3.3 A tilting object finitely presented by the silting object.}~
We construct a tilting object in $\pr(M)$ and describe its presentation by $M$.
Set
$$
T=\Sigma\Lambda.
$$
Since $\Lambda$ is the standard tilting object of $K^b(\proj\Lambda)$, the shifted object $T$ is tilting as well. We construct an explicit $M$-presentation of $T$.

Let
$$
E=X\oplus Z\oplus Z.
$$
Then
$$
E^{-1}=P_2,~
E^0=P_1\oplus P_3\oplus P_3,
~
d_E=
\begin{pmatrix}
\alpha\\
0\\
0
\end{pmatrix}.
$$
Regard $\Lambda=P_1\oplus P_2\oplus P_3\oplus P_4$ as a stalk complex concentrated in degree $0$. Define a chain map $e\colon\Lambda\to E$ by
$$
e^0=
\begin{pmatrix}
1_{P_1}&0&0&0\\
0&0&1_{P_3}&0\\
0&0&0&\gamma
\end{pmatrix}.
$$
We use the cochain mapping cone convention
$$
\Cone(e)^n=E^n\oplus\Lambda^{n+1},~
d_{\Cone(e)}^n=
\begin{pmatrix}
d_E^n&e^{n+1}\\
0&-d_\Lambda^{n+1}
\end{pmatrix}.
$$
Put $C=\Cone(e)$. Then
\begin{equation}\label{eq:cone-triangle}
\Lambda\xrightarrow{~e~}E\xrightarrow{~f~}C\longrightarrow\Sigma\Lambda
\end{equation}
is a triangle.

With the direct sum order
$$
C^{-1}=P_2\oplus P_1\oplus P_2\oplus P_3\oplus P_4,
~
C^0=P_1\oplus P_3\oplus P_3,
$$
the differential is
\begin{equation}\label{eq:dC}
d_C=
\begin{pmatrix}
\alpha&1&0&0&0\\
0&0&0&1&0\\
0&0&0&0&\gamma
\end{pmatrix}.
\end{equation}
The morphism $f$ is the canonical cone inclusion. Explicitly, $f^{-1}$ identifies $P_2=E^{-1}$ with the first summand of $C^{-1}$ and $f^0=1_{E^0}$.

\begin{lemma}\label{lem:C-addM}
In $K^b(\proj\Lambda)$, there exists an isomorphism
$$
C\cong W^{\oplus2}\oplus Y.
$$
In particular, $E,C\in\add(M)$.
\end{lemma}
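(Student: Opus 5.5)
The plan is to split off contractible summands of $C$ by Gaussian elimination, that is, by changing bases in $C^{-1}$ and $C^0$ so that the identity entries of $d_C$ in \eqref{eq:dC} become isolated. The complex $C$ sits in degrees $-1$ and $0$. Its differential contains two identity components: $1\colon P_1\to P_1$ (second column, first row) and $1\colon P_3\to P_3$ (fourth column, second row). Each of these will contribute a summand of the form $(Q\xrightarrow{1}Q)$, which is null-homotopic and hence zero in $K^b(\proj\Lambda)$. After removing them, what remains should be $P_2\oplus P_2$ in degree $-1$ with zero map to degree $0$, which is $W^{\oplus 2}$, together with $P_4\xrightarrow{\gamma}P_3$, which is $Y$.

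First, I use an automorphism of $C^{-1}$ to kill the entry $\alpha$ in the first row. Let $\phi$ be the automorphism of $C^{-1}=P_2\oplus P_1\oplus P_2\oplus P_3\oplus P_4$ that is the identity except that it sends the first summand $P_2$ to itself via $1$ and to the second summand $P_1$ via $-\alpha$. Concretely, $\phi$ is the identity plus the entry $-\alpha$ in position $(2,1)$. Then $d_C\phi$ has first column $\alpha-\alpha=0$, and all other columns are unchanged, because the first column has no other nonzero entries. The new differential is therefore
\[
\begin{pmatrix}
0&1&0&0&0\\
0&0&0&1&0\\
0&0&0&0&\gamma
\end{pmatrix}.
\]
No row operations are needed, since the identity entries already sit alone in their rows and columns.

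Second, I read off the direct sum decomposition of complexes:
\[
C\cong (P_2\to 0)\oplus(P_2\to0)\oplus(P_1\xrightarrow{1}P_1)\oplus(P_3\xrightarrow{1}P_3)\oplus(P_4\xrightarrow{\gamma}P_3).
\]
Here the two copies of $P_2$ are the first and third summands of $C^{-1}$, placed in degree $-1$. The first and third rows of $C^0$ pair with the identity columns, and the last summand uses the remaining copy of $P_3$ in $C^0$. A stalk $P_2$ in degree $-1$ is exactly $\Sigma P_2=W$. The middle two summands are contractible. This gives $C\cong W^{\oplus2}\oplus Y$ in the homotopy category. Since $E=X\oplus Z\oplus Z$ by definition, both $E$ and $C$ lie in $\add(M)$.

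The only real point of care is the sign and side convention for matrix composition. Morphisms between projectives are represented by left multiplication by paths, and $d_C\phi$ is computed as a product of matrices of morphisms. I need to check that the entry $-\alpha$ has the correct domain and codomain, namely $P_2\to P_1$. Beyond that, the argument is routine and there is no genuine obstacle.
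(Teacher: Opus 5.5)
Your proposal is correct and is essentially the paper's proof: your automorphism of $C^{-1}$ with the extra entry $-\alpha\colon P_2\to P_1$ is exactly the paper's matrix $\begin{pmatrix}1&0\\-\alpha&1\end{pmatrix}$ on $P_2\oplus P_1$. It turns $(\alpha,1)$ into $(0,1)$ and splits off the contractible pieces $(P_1\xrightarrow{1}P_1)$ and $(P_3\xrightarrow{1}P_3)$, leaving $W^{\oplus2}\oplus Y$. One small bookkeeping slip: the identity entries pair with the first and second rows of $C^0$ (as you correctly said at the start), not the first and third; the third row carries $\gamma$ and gives $Y$.
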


\begin{proof}
Consider the first two summands in the source of \eqref{eq:dC}. The corresponding part of the differential is
$$
P_2\oplus P_1\xrightarrow{(\alpha,~1)}P_1.
$$
The automorphism
$$
\begin{pmatrix}
1&0\\
-\alpha&1
\end{pmatrix}
$$
of $P_2\oplus P_1$ transforms this map into $(0,1)$. Hence this summand decomposes as the direct sum of $W=(P_2\to0)$ and the contractible complex $(P_1\xrightarrow{1}P_1)$.
The third source summand in \eqref{eq:dC} gives another copy of $W$. The fourth source summand forms the contractible complex $(P_3\xrightarrow{1}P_3)$, while the last summand gives $Y=(P_4\xrightarrow{\gamma}P_3)$.
Removing the contractible summands gives the asserted isomorphism.
\end{proof}

Rotating \eqref{eq:cone-triangle}, we obtain
\begin{equation}\label{eq:M-presentation-T}
E\xrightarrow{~f~}C\longrightarrow T\longrightarrow\Sigma E.
\end{equation}
Since $E,C\in\add(M)$, the following consequence is immediate.

\begin{corollary}\label{cor:T-prM}
The object $T=\Sigma\Lambda$ is tilting and belongs to $\pr(M)$.
\end{corollary}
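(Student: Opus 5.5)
The plan is to prove the two assertions of Corollary~\ref{cor:T-prM} separately: that $T$ is tilting, and that $T$ lies in $\pr(M)$.

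\emph{Tilting.} $\Lambda$, viewed as a stalk complex in degree $0$, is tilting in $K^b(\proj\Lambda)$. For $i\neq0$ we have $\Hom(\Lambda,\Sigma^i\Lambda)=0$, since there are no nonzero chain maps between stalk complexes sitting in different degrees. Moreover $\thick(\Lambda)=K^b(\proj\Lambda)$, because every bounded complex of projectives is an iterated extension of shifts of objects of $\add(\Lambda)$, by brutal truncation. The shift $\Sigma$ is a triangle autoequivalence, so it gives
\[
\Hom(\Sigma\Lambda,\Sigma^i\Sigma\Lambda)\cong\Hom(\Lambda,\Sigma^i\Lambda)
\qquad\text{and}\qquad
\thick(\Sigma\Lambda)=\Sigma\,\thick(\Lambda)=K^b(\proj\Lambda).
\]
Hence $T=\Sigma\Lambda$ is tilting.

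\emph{Membership in $\pr(M)$.} First I would rotate the triangle \eqref{eq:cone-triangle}. In any triangulated category, rotating $\Lambda\xrightarrow{e}E\xrightarrow{f}C\xrightarrow{g}\Sigma\Lambda$ gives the triangle
\[
E\xrightarrow{~f~}C\xrightarrow{~g~}\Sigma\Lambda\xrightarrow{-\Sigma e}\Sigma E,
\]
which is \eqref{eq:M-presentation-T}. Next, $E=X\oplus Z\oplus Z$ lies in $\add(M)$ by its definition. By Lemma~\ref{lem:C-addM} we have $C\cong W^{\oplus2}\oplus Y$, so $C\in\add(M)$ as well. Replacing $C$ by this isomorphic object (and adjusting $f$ and $g$ accordingly) still gives a triangle $M^{-1}\to M^0\to T\to\Sigma M^{-1}$ with $M^{-1}=E$ and $M^0=C$ in $\add(M)$. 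This is exactly the defining condition for $T\in\pr(M)$.

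I expect no real obstacle here. The only points needing care are that $C$ is genuinely the cone of $e$, so that \eqref{eq:cone-triangle} is distinguished, and the shift-autoequivalence argument for tilting. Both are standard.
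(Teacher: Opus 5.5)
Your proposal is correct and follows the paper's argument. $T$ is tilting because it is a shift of the standard tilting object $\Lambda$, and rotating \eqref{eq:cone-triangle} gives the presentation \eqref{eq:M-presentation-T}, with $E\in\add(M)$ by definition and $C\in\add(M)$ by Lemma~\ref{lem:C-addM}. You also spell out routine checks that the paper leaves implicit: the vanishing of chain maps between stalk complexes in different degrees, generation via brutal truncation, and the sign in the rotated triangle.
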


By Proposition~\ref{prop:M-silting} and Corollary~\ref{cor:T-prM}, all assumptions in \cite[Question~3.15]{Yang2025} are satisfied. Namely, $K^b(\proj\Lambda)$ is idempotent complete and algebraic, $M$ is silting in $K^b(\proj\Lambda)$, and $T$ is a tilting object in $\pr(M)$. Therefore, it remains to examine whether the associated two-term silting complex $\PP_M(T)$ admits a negative self extension.

\section{The negative self extension}
{\bf 4.1 The relevant morphism space.}~
We now compute the negative extension arising from the presentation constructed above.

\begin{lemma}\label{lem:HomCE}
We have
$$
\dim_k\Hom_{K^b(\proj\Lambda)}(C,E)=1.
$$
More precisely, this space is generated by the homotopy class of the chain map
$g\colon C\to E$ with $g^0=0$ and
\begin{equation}\label{eq:g}
g^{-1}=
\begin{pmatrix}
0&0&0&0&\beta\gamma
\end{pmatrix}
\colon C^{-1}\longrightarrow E^{-1}=P_2.
\end{equation}
\end{lemma}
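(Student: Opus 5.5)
Since the statement only concerns a morphism space in $K^b(\proj\Lambda)$, I will compute $\Hom_{K^b(\proj\Lambda)}(C,E)$ directly, using the decompositions already available. By Lemma~\ref{lem:C-addM}, $C\cong W^{\oplus2}\oplus Y$ up to contractible summands, and $E=X\oplus Z\oplus Z$. So the space splits as a direct sum of blocks $\Hom(W,X)$, $\Hom(W,Z)$, $\Hom(Y,X)$, $\Hom(Y,Z)$. I will compute each block with the formula for morphisms between two-term complexes. For $U,V$ two-term, a chain map is a pair $(h^{-1},h^0)$ with $h^0d_U=d_Vh^{-1}$, taken modulo homotopies $s\colon U^0\to V^{-1}$, i.e.\ $(sd_U,\,d_Vs)$.

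The blocks, using \eqref{eq:homspaces-projectives}:
\begin{itemize}
\item \emph{$W=(P_2\to 0)$ to $X$.} A chain map is $h^{-1}\colon P_2\to P_2$ with $\alpha h^{-1}=0$. Since $\alpha$ is nonzero and $h^{-1}$ is a scalar, $h^{-1}=0$, so this block is $0$.
\item \emph{$W$ to $Z=P_3$.} $Z$ is concentrated in degree $0$ and $W$ in degree $-1$, so this block is $0$.
\item \emph{$Y\to Z$.} Chain maps are $h^0\in\Hom(P_3,P_3)=k$ with $h^0\gamma=0$, forcing $h^0=0$. So this block is $0$.
\item \emph{$Y\to X$.} Here $h^{-1}\colon P_4\to P_2$ lies in $k\beta\gamma$ and $h^0\colon P_3\to P_1$ lies in $k\alpha\beta$. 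The chain condition reads $h^0\gamma=\alpha h^{-1}$, and both sides vanish by $\alpha\beta\gamma=0$. So the chain maps form a $2$-dimensional space spanned by $(\beta\gamma,0)$ and $(0,\alpha\beta)$. Homotopies $s\colon P_3\to P_2$ lie in $k\beta$ and give $(s\gamma,\alpha s)=(\beta\gamma,\alpha\beta)$ for $s=\beta$. The quotient is therefore $1$-dimensional, spanned by the class of $(\beta\gamma,0)$, which is not null-homotopic because $\alpha\beta\neq0$.
\end{itemize}
Hence the total dimension is $1$.

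The main obstacle is bookkeeping. I must transport this generator back through the explicit isomorphism of Lemma~\ref{lem:C-addM} to a chain map on the original $C$, and check that it is $g$. The $Y$-summand of $C$ is the last source summand $P_4$ of $C^{-1}$ together with the third summand $P_3$ of $C^0$. It is untouched by the base change, which only mixed the first two summands. The $X$-summand of $E$ is $E^{-1}=P_2$ together with the $P_1$ in $E^0$. So the generator corresponds to $g^{-1}=(0,0,0,0,\beta\gamma)$ and $g^0=0$.

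As a sanity check I will verify the chain condition directly against \eqref{eq:dC}: $g^0d_C=0$ and $d_Eg^{-1}=(0,0,0,0,\alpha\beta\gamma)^{T}=0$. Alternatively, one can avoid the decomposition entirely. Compute all chain maps $C\to E$ directly, then quotient by homotopies $s\colon C^0\to E^{-1}$; the contractible summands absorb the remaining freedom. I would keep this as a backup if the transport step becomes messy.
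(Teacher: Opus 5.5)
Your proof is correct, but it takes a different route from the paper. The paper does not use Lemma~\ref{lem:C-addM} here. It writes a general pair $(u^{-1},u^0)$ in the original coordinates of $C$ (five summands in degree $-1$, three in degree $0$) and solves the chain-map equations, which gives a three-parameter family $(t,v,c)$. It then quotients by the two-dimensional subspace $(\lambda,\mu,\mu)$ of null-homotopic maps and reads off $g$ as $(0,1,0)$. You instead discard the contractible summands first. Three of your four blocks then vanish for trivial degree or scalar reasons, and the whole space sits in $\Hom(Y,X)$. This makes clear that the class comes from the $Y\to X$ component, using exactly $\alpha\beta\gamma=0$ for the chain condition and $\alpha\beta\neq0$ for non-triviality. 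In the paper's computation, $t$ is a spurious contribution of the contractible summand $(P_3\xrightarrow{1}P_3)$, while $(v,c)$ are precisely your two $Y\to X$ chain maps, and your homotopy $(\beta\gamma,\alpha\beta)$ is the paper's $\mu$-direction. The cost of your route is the transport step, but it is easier than you feared. In $d_C$, the fifth column and the third row each contain only the entry $\gamma$. So the fifth summand of $C^{-1}$ together with the third summand of $C^0$ is already a direct summand $Y$ of $C$ as a complex, and $E=X\oplus Z\oplus Z$ holds on the nose. The generator of $\Hom_{K^b(\proj\Lambda)}(Y,X)$ therefore passes to $\Hom_{K^b(\proj\Lambda)}(C,E)$ through coordinate projection and inclusion, with no need to invert the base change, and it is literally $g$. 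The paper's brute-force computation needs no decomposition at all and produces $g$ directly in the coordinates used afterwards in Lemma~\ref{lem:annihilation}. Your approach is shorter and explains where the class comes from.
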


\begin{proof}
Let $u\colon C\to E$ be a chain map. By \eqref{eq:homspaces-projectives}, the degree $-1$ component has the form
$$
u^{-1}=
\begin{pmatrix}
r&0&s&t\beta&v\beta\gamma
\end{pmatrix},
$$
and the degree $0$ component has the form
$$
u^0=
\begin{pmatrix}
A&B\alpha\beta&c\alpha\beta\\[1mm]
0&D&E_1\\[1mm]
0&F&G
\end{pmatrix}.
$$
The chain map equation
$$
d_Eu^{-1}=u^0d_C
$$
implies
$$
A=r=s=D=E_1=F=G=0,~ B=t,
$$
where $t,v,c\in k$ are arbitrary. Hence the space of chain maps from $C$ to $E$ has dimension three, with parameters $(t,v,c)$.

A chain homotopy is given by a morphism
$$
h\colon C^0=P_1\oplus P_3\oplus P_3\longrightarrow E^{-1}=P_2.
$$
By \eqref{eq:homspaces-projectives}, such a morphism has the form
$$
h=
\begin{pmatrix}
0&\lambda\beta&\mu\beta
\end{pmatrix}.
$$
The induced null-homotopic chain map $d_Eh+hd_C$ corresponds to the parameters
$$
(t,v,c)=(\lambda,\mu,\mu).
$$
Therefore the subspace of null-homotopic maps is
$$
\{(\lambda,\mu,\mu)\mid \lambda,\mu\in k\}
=
\operatorname{span}\{(1,0,0),(0,1,1)\},
$$
which has dimension two. It follows that
$$
\dim_k\Hom_{K^b(\proj\Lambda)}(C,E)=3-2=1.
$$
The chain map $g$ in \eqref{eq:g} corresponds to $(t,v,c)=(0,1,0)$, which is not null-homotopic. Hence its homotopy class is nonzero and generates $\Hom_{K^b(\proj\Lambda)}(C,E)$.
\end{proof}

\hspace{-5mm}{\bf 4.2 The two annihilation conditions.~}
We verify the two annihilation conditions appearing in Lemma~\ref{lem:obstruction}
for the morphism obtained above.

\begin{lemma}\label{lem:annihilation}
For the map $g$ in {\rm Lemma~\ref{lem:HomCE}}, one has
$$
gf=0~\text{and}~ fg=0
$$
in $K^b(\proj\Lambda)$. Consequently,
$$
\{u\in\Hom_{K^b(\proj\Lambda)}(C,E)\mid uf=0=fu\}=k[g].
$$
\end{lemma}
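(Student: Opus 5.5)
The plan is to verify both composites directly at the level of chain maps, using the explicit matrices \eqref{eq:dC} and \eqref{eq:g} and the description of $f$ as the canonical cone inclusion. The second assertion then follows from Lemma~\ref{lem:HomCE}. Since $\Hom_{K^b(\proj\Lambda)}(C,E)=k[g]$ is one-dimensional, the set $\{u\mid uf=0=fu\}$ is a subspace of $k[g]$. Hence it equals $k[g]$ as soon as $g$ itself satisfies both conditions. Here the two conditions mean that the composite $E\xrightarrow{f}C\xrightarrow{g}E$ and the composite $C\xrightarrow{g}E\xrightarrow{f}C$ vanish in the homotopy category.

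\emph{The composite $E\to C\to E$.} Here I expect the composite to be zero already as a chain map. In degree $0$ we have $g^0=0$. In degree $-1$, the map $f^{-1}$ identifies $E^{-1}=P_2$ with the first summand of $C^{-1}$, while by \eqref{eq:g} the map $g^{-1}$ is supported only on the fifth summand $P_4$. Hence the degree $-1$ component vanishes, and the whole composite is $0$.

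\emph{The composite $C\to E\to C$.} This is the step I expect to be the main (though mild) obstacle, since the composite is a nonzero chain map and we must exhibit a null-homotopy. Its degree $0$ component is $f^0g^0=0$. Its degree $-1$ component is the endomorphism $N$ of $C^{-1}$ whose only nonzero entry is $\beta\gamma$, going from the fifth summand $P_4$ to the first summand $P_2$.

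We seek $h\colon C^0=P_1\oplus P_3\oplus P_3\to C^{-1}$ with $h\,d_C=N$ and $d_C\,h=0$.
\begin{itemize}
\item The first attempt is to send the third summand $P_3$ of $C^0$ to the first summand $P_2$ of $C^{-1}$ via $\beta$. The third row of $d_C$ is $(0,0,0,0,\gamma)$, so $h\,d_C$ has the single entry $\beta\gamma$ in position $(1,5)$, as desired.
\item However, $d_C\,h$ then has the entry $\alpha\beta\neq0$, coming from the first column $(\alpha,0,0)^{T}$ of $d_C$.
\item To correct this, we also send the third summand $P_3$ to the second summand $P_1$ of $C^{-1}$ via $-\alpha\beta$. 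The first row of $d_C$ is $(\alpha,1,0,0,0)$, so the first row of $d_C\,h$ becomes $\alpha\beta-\alpha\beta=0$, and the other rows vanish.
\item The extra contribution to $h\,d_C$ is $-\alpha\beta\gamma$ in position $(2,5)$, which is $0$ by the relation $\alpha\beta\gamma=0$ in \eqref{eq:path-relations}.
\end{itemize}
Thus $h\,d_C=N$ and $d_C\,h=0$. The composite is therefore null-homotopic, that is, zero in $K^b(\proj\Lambda)$.

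Combining the two computations with Lemma~\ref{lem:HomCE} yields the displayed equality $\{u\mid uf=0=fu\}=k[g]$.
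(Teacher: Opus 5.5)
Your proposal is correct and follows the paper's proof essentially verbatim. $gf=0$ holds on the nose because $f^{-1}$ lands in the first $P_2$-summand while $g^{-1}$ is supported on the $P_4$-summand. $fg$ is null-homotopic via the same homotopy $h$, which sends the third $P_3$-summand of $C^0$ to $(\beta,-\alpha\beta,0,0,0)^{T}$ and uses $\alpha\beta\gamma=0$; the final equality then follows, as in the paper, from the one-dimensionality of $\Hom_{K^b(\proj\Lambda)}(C,E)$ in Lemma~\ref{lem:HomCE}.
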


\begin{proof}
The map $f^{-1}\colon E^{-1}=P_2\to C^{-1}$ is the inclusion into the first $P_2$-summand of $C^{-1}$, while $g^{-1}$ is supported only on the last $P_4$-summand. Since $g^0=0$, the equality $gf=0$ holds already at the level of complexes.

It remains to show that $fg$ is null-homotopic. The degree $0$ component of $fg$ is zero, and its degree $-1$ component is the morphism $\beta\gamma$ from the last $P_4$-summand of $C^{-1}$ to the first $P_2$-summand. Define
$$
h\colon C^0\longrightarrow C^{-1}
$$
to be zero on the first two summands $P_1\oplus P_3$ and on the third $P_3$-summand set
$$
h|_{P_3}=
\begin{pmatrix}
\beta\\-\alpha\beta\\0\\0\\0
\end{pmatrix}.
$$
Then $d_Ch=0$, since the first row of $d_C$ gives
$
\alpha\beta-\alpha\beta=0.
$
On the other hand, $hd_C$ vanishes on every source summand except $P_4$, on which it is
$$
\begin{pmatrix}
\beta\gamma\\-\alpha\beta\gamma\\0\\0\\0
\end{pmatrix}
=
\begin{pmatrix}
\beta\gamma\\0\\0\\0\\0
\end{pmatrix},
$$
using $\alpha\beta\gamma=0$. Hence
$
fg=d_Ch+hd_C,
$
so $fg$ is null-homotopic.

Finally, Lemma~\ref{lem:HomCE} gives
$$
\dim_k\Hom_{K^b(\proj\Lambda)}(C,E)=1,
$$
and the class $[g]$ is nonzero. Therefore the subspace satisfying the two annihilation conditions is precisely $k[g]$.
\end{proof}

\hspace{-5mm}{\bf 4.3 Passage to the endomorphism algebra.}~
We now pass to the endomorphism algebra of the silting object. Let
$$
B=\End_{K^b(\proj\Lambda)}(M)
$$
and let
$$
F=\Hom_{K^b(\proj\Lambda)}(M,-)\colon\add(M)\xrightarrow{~\sim~}\proj B.
$$
The presentation \eqref{eq:M-presentation-T} induces
$
\PP_M(T)=\bigl(F(E)\xrightarrow{F(f)}F(C)\bigr).
$

\begin{proposition}\label{prop:negative-dim}
There exists an isomorphism of $k$-vector spaces
$$
\Hom_{K^b(\proj B)}
\bigl(\PP_M(T),\Sigma^{-1}\PP_M(T)\bigr)
\cong k.
$$
In particular,
$$
\dim_k\Hom_{K^b(\proj B)}
\bigl(\PP_M(T),\Sigma^{-1}\PP_M(T)\bigr)=1.
$$
\end{proposition}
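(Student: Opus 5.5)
We apply Lemma~\ref{lem:obstruction} to the two-term complex $\PP_M(T)=\bigl(F(E)\xrightarrow{F(f)}F(C)\bigr)$ in the additive category $\proj B$. This identifies $\Hom_{K^b(\proj B)}(\PP_M(T),\Sigma^{-1}\PP_M(T))$ with the space of morphisms $u'\colon F(C)\to F(E)$ in $\proj B$ satisfying $u'\,F(f)=0=F(f)\,u'$. No quotient by homotopies is needed, by the last clause of that lemma.

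Next we transport this description back to $\add(M)$. Since $E,C\in\add(M)$ by Lemma~\ref{lem:C-addM}, and $F=\Hom_{K^b(\proj\Lambda)}(M,-)\colon\add(M)\to\proj B$ is an equivalence of $k$-linear categories, $F$ induces a $k$-linear isomorphism $\Hom_{K^b(\proj\Lambda)}(C,E)\cong\Hom_B(F(C),F(E))$. This isomorphism is compatible with composition: $F(uf)=F(u)F(f)$ and $F(fu)=F(f)F(u)$. Faithfulness then shows that $F(u)$ satisfies the two annihilation conditions if and only if $uf=0$ and $fu=0$ in $K^b(\proj\Lambda)$. Here composition is taken in the homotopy category, because $\add(M)$ is a full subcategory of $K^b(\proj\Lambda)$.

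Hence the negative self extension space is isomorphic to
\[
\{u\in\Hom_{K^b(\proj\Lambda)}(C,E)\mid uf=0=fu\}.
\]
By Lemma~\ref{lem:annihilation} this space equals $k[g]$. By Lemma~\ref{lem:HomCE}, $[g]\neq0$, so this space is one-dimensional, which is the claim.

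The only point needing care is the second step. We must check that the annihilation conditions of Lemma~\ref{lem:obstruction}, read in $\proj B$, match the conditions of Lemma~\ref{lem:annihilation}, which hold only in the homotopy category $K^b(\proj\Lambda)$; note that $fg$ is merely null-homotopic, not zero as a chain map. This works because $F$ is defined on $\add(M)\subseteq K^b(\proj\Lambda)$, so its source morphisms are homotopy classes and $F$ sends $[fg]=0$ to $0$. We also use that morphisms in the target $\proj B$ are honest module maps, which is exactly what Lemma~\ref{lem:obstruction} requires.
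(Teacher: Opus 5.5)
Your proposal is correct and matches the paper's proof step for step. Lemma~\ref{lem:obstruction} reduces the question to the two annihilation conditions in $\proj B$; full faithfulness of $F$ on $\add(M)$ transports these to $\{u\in\Hom_{K^b(\proj\Lambda)}(C,E)\mid uf=0=fu\}$; and Lemmas~\ref{lem:annihilation} and~\ref{lem:HomCE} identify that space with $k[g]$, where $[g]\neq 0$. Your remark that $fg$ vanishes only up to homotopy, which is harmless because $F$ is defined on homotopy classes, is a correct and useful clarification.
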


\begin{proof}
By Lemma~\ref{lem:obstruction}, the left hand side is
$$
\{\varphi\in\Hom_B(F(C),F(E))\mid
\varphi F(f)=0=F(f)\varphi\}.
$$
The functor $F$ is fully faithful on $\add(M)$, and $E,C\in\add(M)$ by Lemma~\ref{lem:C-addM}.
It therefore identifies the preceding space with
$
\{u\in\Hom_{K^b(\proj\Lambda)}(C,E)\mid uf=0=fu\}.
$
Lemma~\ref{lem:annihilation} identifies the latter space with $k[g]$.
\end{proof}

\begin{proof}[\bf \emph{Proof of Theorem~\ref{thm:main}}]
Proposition~\ref{prop:M-silting} proves part~(1), and Corollary~\ref{cor:T-prM} proves part~(2).
Part~(3) follows from Proposition~\ref{prop:negative-dim}.
Indeed, a tilting complex has no nonzero self extensions in degree $-1$, whereas
$$
\Hom_{K^b(\proj B)}
\bigl(\PP_M(T),\Sigma^{-1}\PP_M(T)\bigr)\neq0.
$$
Hence $\PP_M(T)$ is not tilting.
On the other hand, $T$ is a two-term silting object in $\pr(M)$ in the terminology of \cite{Yang2025}. Therefore, by \cite[Theorem~3.14(c)]{Yang2025}, $\PP_M(T)$ is a two-term silting complex. This completes the proof.
\end{proof}

\section{Why the counterexample works}

The preceding computation admits a simple interpretation in terms of the path relations in $\Lambda$.
The nonzero morphism responsible for the negative extension is induced by the path
$$
P_4\xrightarrow{\beta\gamma}P_2\xrightarrow{\alpha}P_1.
$$
The three relations in \eqref{eq:path-relations} enter the construction in different ways.

First, the condition $\beta\gamma\neq0$ ensures that the corresponding chain map is nonzero.
Second, the relation
$
\alpha\beta\gamma=0
$
is precisely the chain-map condition
$$
d_Eg^{-1}=\alpha\beta\gamma=0.
$$
Third, $\alpha\beta\neq0$ prevents this chain map from being null-homotopic: any possible homotopy from a $P_3$-summand to $P_2$ is a scalar multiple of $\beta$, whose composition with $\alpha$ gives the nonzero morphism $\alpha\beta$.
Thus the vanishing of the length-three path, together with the nonvanishing of its length-two subpaths, produces the negative extension.

The projective dimension criterion of \cite{XieYangZhang2023} also clarifies the position of this example. Recall that
$$
E=X\oplus P_3\oplus P_3.
$$
The zeroth cohomology of $X$ is the simple module $S_1$, with minimal projective resolution
$$
0\longrightarrow P_4
\xrightarrow{\beta\gamma}P_2
\xrightarrow{\alpha}P_1
\longrightarrow S_1\longrightarrow0.
$$
Indeed, $\ker(\alpha)$ is generated by $\beta\gamma$. Hence
$$
H^0(E)\cong S_1\oplus P_3^{\oplus2},
$$
and
$$
\pd S_1=2,~ \pd H^0(E)=2.
$$
Xie, Yang, and Zhang \cite[Lemma~5.5]{XieYangZhang2023} proved that the associated two-term silting complex is tilting when the corresponding module $H^0(E)$ has projective dimension at most one.
Here the projective dimension is two, so their sufficient condition does not apply.
\vspace{8mm}

\hspace{-5.5mm}\textbf{Data Availability}\hspace{2mm} Data sharing not applicable to this article as no datasets were generated or analysed during
the current study.

\hspace{-5.5mm}\textbf{Conflict of Interests}\hspace{2mm} The authors declare that they have no conflicts of interest to this work.

%

\hspace{-5mm}\textbf{Jing He}\\
School of Mathematics and Statistics, Hunan University of Technology and Business, 410205 Changsha, Hunan P. R. China\\
E-mail: jinghe1003@163.com\\[0.4cm]
\textbf{Panyue Zhou}\\
School of Mathematics and Statistics, Changsha University of Science and Technology, 410114 Changsha, Hunan, P. R. China\\
E-mail: panyuezhou@163.com


\begin{thebibliography}{99}
\bibitem[AI]{AiharaIyama2012}
T. Aihara, O. Iyama,
\emph{Silting mutation in triangulated categories},
J. Lond. Math. Soc. (2) \textbf{85} (2012), no.~3, 633--668.

\bibitem[BZ]{BuanZhou2016}
A. B. Buan, Y. Zhou,
\emph{A silting theorem},
J. Pure Appl. Algebra \textbf{220} (2016), no.~7, 2748--2770.

\bibitem[XYZ]{XieYangZhang2023}
Z. Xie, D. Yang, H. Zhang,
\emph{A differential graded approach to the silting theorem},
J. Pure Appl. Algebra \textbf{227} (2023), Paper No.~107180, 23 pp.

\bibitem[Y]{Yang2025}
D. Yang,
\emph{From objects finitely presented by a rigid object in a triangulated category to 2-term complexes},
arXiv:2509.08246v1 (2025).



\end{thebibliography}
\end{document}